\documentclass[11pt]{amsart}
\usepackage[margin=30mm]{geometry}
\usepackage{amsmath,amssymb}
\usepackage{amsthm}
\usepackage{mathrsfs}

\newtheorem{thm}{Theorem}

\newtheorem{lem}{Lemma}
\newtheorem{cor}{Corollary}
\newtheorem{defi}{Definition}
\newtheorem{ex}{Example}
\newtheorem{rem}{Remark}
\newtheorem*{theorem}{\it{Theorem}}

\usepackage{enumitem}

\begin{document}

\title{A note on the omega-chaos}
\author{Noriaki Kawaguchi}
\subjclass[2020]{37B05, 37D45}
\keywords{continuous maps, direct products, $\omega$-chaos, $\omega^\ast$-chaos, proximal}
\address{Department of Mathematical and Computing Science, School of Computing, Institute of Science Tokyo, 2-12-1 Ookayama, Meguro-ku, Tokyo 152-8552, Japan}
\email{gknoriaki@gmail.com}

\begin{abstract}
For any continuous self-map of a compact metric space, we provide sufficient conditions under which the infinite direct product of the map is $\omega$-chaotic. We also apply the result to obtain some examples of unusual $\omega$-chaotic maps.
\end{abstract}

\maketitle

\markboth{NORIAKI KAWAGUCHI}{A note on the omega-chaos}

{\em Chaos} is a key concept in the modern theory of dynamical systems and is relevant to unpredictable phenomena that can occur even in deterministic systems. One mathematical definition of chaos is {\em Li--Yorke chaos}, which focuses on the complicated long-term behavior of pairs of orbits \cite{LY}. A related definition is {\em $\omega$-chaos} that highlights the intricate intersections of $\omega$-limit sets \cite{SL2}. Since its introduction by S.\:Li in the context of interval maps, $\omega$-chaos and its variants have been the subject of extensive studies, revealing subtle structures in the asymptotic behavior of dynamical systems.

In this paper, we provide some criteria for $\omega$-chaos and examine its relationship with other dynamical properties. Specifically, for any continuous self-map of a compact metric space, we give sufficient conditions under which the infinite direct product of the map is $\omega$-chaotic. We also apply the result to obtain some examples of unusual $\omega$-chaotic maps, e.g., an $\omega$-chaotic map that is proximal and so not $\omega^\ast$-chaotic.

We recall the definition of {\em $\omega$-chaos} from \cite{SL2}. Throughout, $X$ denotes a compact metric space endowed with a metric $d\colon X\times X\to[0,\infty)
$. Given a continuous map $f\colon X\to X$ and $x\in X$, we denote by $\omega(x,f)$ the {\em $\omega$-limit set} of $x$ for $f$, i.e., the set of $y\in X$ such that
\[
\lim_{j\to\infty}f^{i_j}(x)=y
\]
for some $0\le i_1<i_2<\cdots$. Note that $\omega(x,f)$ is a closed subset of $X$ and satisfies $f(\omega(x,f))=\omega(x,f)$.

\begin{defi}
\normalfont
Let $f\colon X\to X$ be a continuous map. We denote by $Per(f)$ the set of {\em periodic points} for $f$:
\[
Per(f)=\bigcup_{j\ge1}\{x\in X\colon f^j(x)=x\}.
\]
A subset $S$ of $X$ is said to be an {\em $\omega$-scrambled set} for $f$ if for any $x,y\in S$ with $x\ne y$,
\begin{itemize}
\item $\omega(x,f)\setminus\omega(y,f)$ is an uncountable set,
\item $\omega(x,f)\cap\omega(y,f)\ne\emptyset$,
\item $\omega(x,f)\setminus Per(f)\ne\emptyset$.
\end{itemize}
We say that $f$ is {\em $\omega$-chaotic} if there is an uncountable $\omega$-scrambled set for $f$.
\end{defi}

The main result of this paper is the following theorem.

\begin{thm}
Given a continuous map $f\colon X\to X$, let $X^\mathbb{N}$ be the product space of infinitely many copies of $X$ and let
\[
g=f^\mathbb{N}\colon X^\mathbb{N}\to X^\mathbb{N}
\]
be the map defined by: for any $u=(u_n)_{n\ge1},v=(v_n)_{n\ge1}\in X^\mathbb{N}$, $v=g(u)$ if and only if $v_n=f(u_n)$ for all $n\ge1$. If there are a closed subset $\Lambda$ of $X$ with $f(\Lambda)\subset\Lambda$, $p\in\Lambda$, and $z\in X$ such that
\begin{itemize}
\item $\omega((p,z),f\times f)\cap\Delta\ne\emptyset$ where $\Delta=\{(x,y)\in X\times X\colon x=y\}$,
\item $\omega(z,f)\setminus\Lambda$ is an uncountable set,
\item $\omega(z,f)\setminus Per(f)\ne\emptyset$,
\end{itemize}
then $g$ is $\omega$-chaotic.
\end{thm}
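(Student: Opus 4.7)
The plan is to build an uncountable $\omega$-scrambled set for $g$ all of whose members have coordinates drawn from the two-point alphabet $\{p,z\}$. This restriction is the crux: since each such point has only two distinct values appearing among its entries, a single increasing integer sequence $(j_\ell)$ along which both $f^{j_\ell}(p)$ and $f^{j_\ell}(z)$ converge will, by the product topology on $X^{\mathbb N}$, simultaneously force $g^{j_\ell}$ of the whole point to converge. This is essential because $\omega$-limit sets in infinite product spaces are in general much smaller than products of coordinate $\omega$-limits.

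For each $\alpha=(\alpha_k)\in\{0,1\}^{\mathbb N}$ I would define $u^\alpha\in X^{\mathbb N}$ by pairing: $(u^\alpha_{2k-1},u^\alpha_{2k})=(z,p)$ if $\alpha_k=0$ and $(p,z)$ if $\alpha_k=1$. Then $\alpha\mapsto u^\alpha$ is injective, so $S=\{u^\alpha\}$ is uncountable; every $u^\alpha$ has infinitely many $z$-coordinates; and for $\alpha\ne\beta$ there are indices $n_0,n_1$ with $u^\alpha_{n_0}=z,\;u^\beta_{n_0}=p$ and $u^\alpha_{n_1}=p,\;u^\beta_{n_1}=z$. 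The basic device, used throughout, is: given any increasing $(j_\ell)$, compactness lets us extract a subsequence with $f^{j_\ell}(p)\to b$ and $f^{j_\ell}(z)\to a$; then $g^{j_\ell}(u^\alpha)$ converges to the point $v$ whose $n$-th coordinate is $a$ if $u^\alpha_n=z$ and $b$ if $u^\alpha_n=p$. Moreover, because $\Lambda$ is closed and $f$-invariant and contains $p$, every such $b$ lies in $\Lambda$, and so does the $n$-th coordinate of every limit in $\omega(u^\alpha,g)$ at any $p$-position $n$.

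The three $\omega$-scrambling conditions then follow. Condition~(i) supplies a subsequence along which both $f^{j_\ell}(p)$ and $f^{j_\ell}(z)$ tend to the same $w$, so $(w,w,\ldots)\in\omega(u^\alpha,g)\cap\omega(u^\beta,g)$ for all $\alpha,\beta$, giving a common point. For the uncountable-difference condition, fix $\alpha\ne\beta$, choose $n_0$ with $u^\alpha_{n_0}=z,\;u^\beta_{n_0}=p$, and for each $a\in\omega(z,f)\setminus\Lambda$ invoke the device to produce $v\in\omega(u^\alpha,g)$ with $v_{n_0}=a$; since every point of $\omega(u^\beta,g)$ has its $n_0$-th coordinate in $\Lambda$ while $a\notin\Lambda$, this $v$ lies outside $\omega(u^\beta,g)$, and distinct $a$'s give distinct $v_{n_0}$'s, so by~(ii) we obtain uncountably many such $v$; the reverse inclusion is handled symmetrically using $n_1$. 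For the non-periodic condition, pick $q\in\omega(z,f)\setminus Per(f)$ from~(iii) and any position $n_0$ with $u^\alpha_{n_0}=z$, and build $v\in\omega(u^\alpha,g)$ with $v_{n_0}=q$: if $v$ were $g$-periodic then every $v_n$ would be $f$-periodic, contradicting $q\notin Per(f)$.

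The main obstacle I anticipate is precisely the one that the two-symbol construction is designed to circumvent, namely controlling $\omega(u,g)$ in the infinite product through a single joint subsequence across all coordinates at once. The one further point requiring care is the invariance argument that every coordinate of $\omega(u^\beta,g)$ at a $p$-position lies in $\Lambda$; this uses closedness and $f$-invariance of $\Lambda$ in an essential way, and it is the hinge that converts the uncountability of $\omega(z,f)\setminus\Lambda$ given by~(ii) into the uncountability of the symmetric differences of the $\omega$-limit sets.
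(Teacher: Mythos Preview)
Your proof is correct and follows the same core strategy as the paper: work inside $\{p,z\}^{\mathbb N}$, exploit that a single subsequence along which both $f^{j_\ell}(p)$ and $f^{j_\ell}(z)$ converge forces $g^{j_\ell}$ to converge on every two-symbol point, and then use the coordinate projection at a $p$-position (together with $f$-invariance and closedness of $\Lambda$) to push the uncountability of $\omega(z,f)\setminus\Lambda$ up to $\omega(s,g)\setminus\omega(t,g)$. The one genuine difference is how the uncountable scrambled subset of $\{p,z\}^{\mathbb N}$ is obtained: the paper appeals to the Kuratowski--Mycielski theorem (its Lemma~1) to extract a Cantor set $S$ with the coordinate-separation property, whereas you bypass this entirely via the explicit pairing $\alpha\mapsto u^\alpha$, which from a single index $k$ with $\alpha_k\ne\beta_k$ immediately yields both an $n_0$ with $(u^\alpha_{n_0},u^\beta_{n_0})=(z,p)$ and an $n_1$ with the roles reversed. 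Your route is more elementary and self-contained; the paper's route is terser once the machinery is available and would adapt more easily if one wanted $S$ to satisfy further generic conditions.
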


Before proving Theorem 1, we state and prove two corollaries of Theorem 1.

\begin{cor}
Given a continuous map $f\colon X\to X$ and $x\in X$, let
\[
Y=\{f^i(x)\colon i\ge0\}\cup\omega(x,f),
\]
$h=f|_Y\colon Y\to Y$, and let $g=h^\mathbb{N}\colon Y^\mathbb{N}\to Y^\mathbb{N}$. If
\begin{itemize}
\item $\omega(x,f)$ is an uncountable set,
\item $\omega(x,f)\cap Per(f)\ne\emptyset$,
\item $\omega(x,f)\setminus Per(f)\ne\emptyset$,
\end{itemize}
then $g$ is $\omega$-chaotic.
\end{cor}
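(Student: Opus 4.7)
The plan is to apply Theorem 1 to the system $h = f|_Y \colon Y \to Y$, choosing $z = x$ and constructing the required $p$ and $\Lambda$ from the hypotheses of the corollary. Since $Y$ equals the orbit closure $\overline{\{f^i(x) : i \ge 0\}}$, it is compact and $f$-invariant, so $h$ is a well-defined continuous self-map; moreover $\omega(x, h) = \omega(x, f)$ and $Per(h) = Per(f) \cap Y$, which will let the uncountability and non-periodicity assumptions on $\omega(x, f)$ transfer directly to the corresponding hypotheses of Theorem 1.

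The heart of the argument will be producing a $p \in \Lambda$ such that $\omega((p, x), h \times h) \cap \Delta \ne \emptyset$. I will pick any $q \in \omega(x, f) \cap Per(f)$, let $k$ be its minimal period, and take $i_1 < i_2 < \cdots$ with $f^{i_j}(x) \to q$. By the pigeonhole principle the residues $i_j \bmod k$ coincide with some common value $r$ infinitely often, so after passing to a subsequence I may assume $i_j \equiv r \pmod{k}$ for all $j$. Setting $p := f^{k-r}(q)$ (which lies on the periodic orbit of $q$ and therefore in $\omega(x, f) \subset Y$), a direct computation gives $f^{i_j}(p) = f^{i_j + k - r}(q) = q$ along this subsequence, since $i_j + k - r$ is a positive multiple of $k$. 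Consequently $(f^{i_j}(p), f^{i_j}(x)) \to (q, q) \in \Delta$, placing $(q, q)$ in $\omega((p, x), h \times h) \cap \Delta$.

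It remains to take $\Lambda := \{p, f(p), \ldots, f^{k-1}(p)\}$, a finite (hence closed) $h$-invariant subset of $Y$ containing $p$. Because $\Lambda$ is finite while $\omega(x, h)$ is uncountable, $\omega(x, h) \setminus \Lambda$ is uncountable; and $\omega(x, h) \setminus Per(h) = \omega(x, f) \setminus Per(f)$ is nonempty by hypothesis. With all three bullet conditions of Theorem 1 then verified, the conclusion that $g = h^\mathbb{N}$ is $\omega$-chaotic will follow. The only delicate point is the residue-class selection used to force a diagonal accumulation from the pair $(p, x)$; the remaining verifications amount to noting that the restricted quantities $\omega(x, h)$ and $Per(h)$ agree with their ambient counterparts on $\omega(x, f)$, together with the finiteness of the periodic orbit $\Lambda$.
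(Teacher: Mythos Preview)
Your proposal is correct and follows essentially the same strategy as the paper: take $\Lambda$ to be the finite periodic orbit through a chosen point of $\omega(x,f)\cap Per(f)$, and synchronize along residue classes modulo the period to force a diagonal accumulation point of $(p,z)$ under $f\times f$. The only cosmetic difference is that the paper keeps the periodic point fixed and replaces $x$ by $z=f^i(x)$ (via the decomposition $\omega(x,f)=\bigcup_{i=0}^{j-1}\omega(f^i(x),f^j)$), whereas you keep $z=x$ and instead shift along the periodic orbit to $p=f^{k-r}(q)$; these are dual moves accomplishing the same synchronization.
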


\begin{proof}
Let $p\in\omega(x,f)\cap Per(f)$ and take $j\ge1$ such that
\begin{itemize}
\item $f^j(p)=p$,
\item $f^i(p)\ne p$ for all $1\le i\le j-1$.
\end{itemize}
Since
\[
p\in\omega(x,f)=\bigcup_{i=0}^{j-1}\omega(f^i(x),f^j),
\]
we have $p\in\omega(f^i(x),f^j)$ for some $0\le i\le j-1$. Let $z=f^i(x)$ and
\[
\Lambda=\{f^k(p)\colon0\le k\le j-1\}.
\]
Since $f^j(p)=p$ and $p\in\omega(z,f^j)$, we have
\[
(p,p)\in\mathbb\omega((p,z),f\times f);
\]
therefore, $\omega((p,z),f\times f)\cap\Delta\ne\emptyset$. Since $\omega(z,f)=\omega(x,f)$,
\begin{itemize}
\item $\omega(z,f)\setminus\Lambda$ is an uncountable set,
\item $\omega(z,f)\setminus Per(f)\ne\emptyset$,
\end{itemize}
thus, by Theorem 1, we conclude that $g$ is $\omega$-chaotic.
\end{proof}

A continuous map $f\colon X\to X$ is said to be {\em transitive} if for any non-empty open subsets $U,V$ of $X$, there is $i\ge0$ such that $f^i(U)\cap V\ne\emptyset$. Note that $f$ is transitive if and only if $X=\omega(x,f)$ for some $x\in X$. A non-empty closed subset $M$ of $X$ with $f(M)\subset M$ is called a {\em minimal set} for $f$ if $M=\omega(x,f)$ for all $x\in M$. We say that $f$ is non-minimal if $X$ is not a minimal set for $f$.

\begin{cor}
Let $f\colon X\to X$ be a continuous map and let $g=f^\mathbb{N}\colon X^\mathbb{N}\to X^\mathbb{N}$. If $f$ is transitive and non-minimal, then $g$ is $\omega$-chaotic.
\end{cor}

\begin{proof}
Since $f$ is transitive, we have $X=\omega(z,f)$ for some $z\in X$. By the Auslander--Ellis theorem (see, e.g., \cite[Theorem 5]{Bl}), there are a minimal set $M$ for $f$ and $p\in M$ such that
\[
\liminf_{i\to\infty}d(f^i(p),f^i(z))=0
\]
and so
\[
\omega((p,z),f\times f)\cap\Delta\ne\emptyset.
\]
Note that $X\setminus M\ne\emptyset$ because $f$ is non-minimal. Since $X(=\omega(z,f))$ is perfect, i.e., has no isolated point, $X\setminus M$ is perfect and locally compact Hausdorff, thus $X\setminus M(=\omega(z,f)\setminus M)$ is uncountable. Since
\[
\omega(z,f)\setminus Per(f)=X\setminus Per(f)\ne\emptyset,
\]
letting $\Lambda=M$, we see that the assumptions of Theorem 1 are satisfied; therefore, $g$ is $\omega$-chaotic.
\end{proof}

We shall prove Theorem 1. For the proof, we recall a simplified version of the so-called Kuratowski--Mycielski theorem \cite{M}. For a topological space $Z$, a subset $S$ of $Z$ is called a {\em $G_\delta$-subset} of $Z$ if $S$ is a countable intersection of open subsets of $Z$. By the Baire category theorem, if $Z$ is a complete metric space, then every countable intersection of dense $G_\delta$-subsets of $Z$ is dense in $Z$.

\begin{theorem}[Kuratowski--Mycielski]
Let $P$ be a perfect complete separable metric space. If $R$ is a dense $G_\delta$-subset of $P\times P$, then there is a Cantor set $S$ in $P$ such that
\[
S\times S\setminus\Delta\subset R,
\]
where $\Delta=\{(x,y)\in P\times P\colon x=y\}$.
\end{theorem}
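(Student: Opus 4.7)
The plan is to build $S$ as the set of branches of a Cantor scheme of non-empty open sets $V_\sigma\subset P$ indexed by finite binary words $\sigma\in\{0,1\}^{<\omega}$. First, I will write $R=\bigcap_{n\ge 1}U_n$ with each $U_n$ open and dense in $P\times P$; replacing $U_n$ by $U_1\cap\cdots\cap U_n$, one may assume $U_1\supset U_2\supset\cdots$, with each $U_n$ still open and dense.

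I will construct the $V_\sigma$ by induction on $|\sigma|$, maintaining:
\begin{itemize}
\item $\overline{V_{\sigma 0}}\cup\overline{V_{\sigma 1}}\subset V_\sigma$ and $\overline{V_{\sigma 0}}\cap\overline{V_{\sigma 1}}=\emptyset$;
\item $\mathrm{diam}(V_\sigma)\le 2^{-|\sigma|}$;
\item $\overline{V_\sigma}\times\overline{V_\tau}\subset U_n$ whenever $\sigma\ne\tau$ lie in $\{0,1\}^n$ with $n\ge 1$.
\end{itemize}
Passing from level $n$ to $n+1$, I will first split each level-$n$ set into two disjoint small non-empty open subsets, using perfectness of $P$ (every non-empty open subset has at least two distinct points, hence contains two disjoint small open balls). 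Then I will enumerate the finitely many ordered pairs of distinct length-$(n+1)$ words and process them one at a time, at each substep shrinking the current candidates for $V_\tau$ and $V_{\tau'}$ to smaller non-empty open sets whose closed product box lies in $U_{n+1}$. This is possible since $U_{n+1}$ is open and dense in $P\times P$: its intersection with the current product box is non-empty, and any point of the intersection has a closed product-box neighbourhood contained in $U_{n+1}$. Because subsequent substeps only further shrink the sets, the pair-box inclusions secured earlier are preserved.

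Next, let $S=\bigcap_{n\ge 0}\bigcup_{\sigma\in\{0,1\}^n}\overline{V_\sigma}$. The nesting and diameter conditions together with completeness of $P$ imply that each branch $\alpha\in\{0,1\}^\omega$ determines a unique point $x_\alpha\in\bigcap_n\overline{V_{\alpha|n}}$ and that $\alpha\mapsto x_\alpha$ is a homeomorphism from $\{0,1\}^\omega$ onto $S$, so $S$ is a Cantor set. For distinct $x=x_\alpha,y=x_\beta\in S$, the branches $\alpha,\beta$ disagree from some level on; hence for every $k\ge 1$ one can pick $m\ge k$ with $\alpha|m\ne\beta|m$, and the third bulleted condition gives $(x,y)\in\overline{V_{\alpha|m}}\times\overline{V_{\beta|m}}\subset U_m\subset U_k$. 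Thus $(x,y)\in\bigcap_{k\ge 1}U_k=R$, which proves $S\times S\setminus\Delta\subset R$.

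The main technical obstacle is the inductive step, where I must simultaneously secure the pair-box inclusion into $U_{n+1}$ for all $2^{n+1}(2^{n+1}-1)$ ordered pairs of distinct length-$(n+1)$ words, while preserving non-emptiness, smallness, disjointness, and nesting. The point that makes this work is monotonicity of the product under shrinking, combined with density of $U_{n+1}$, which together allow the pairs to be handled sequentially without the later substeps destroying the inclusions secured earlier.
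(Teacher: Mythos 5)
Your construction is correct: the Cantor-scheme argument with nested open sets of shrinking diameter, pairwise-disjoint closures at each split, and the sequential shrinking of sibling boxes into the decreasing dense open sets $U_{n+1}$ is exactly the standard proof of this (two-variable) case of Mycielski's theorem, and every step you flag as delicate (monotonicity of the product boxes under shrinking, perfectness to split, completeness to realize the branches) is handled properly. Note that the paper itself offers no proof of this statement --- it is quoted as a known theorem with a citation to Mycielski --- so there is nothing internal to compare against; your argument is a sound, self-contained substitute for that citation.
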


By this theorem, we obtain the following lemma.

\begin{lem}
Consider $\{p,z\}$ (with the discrete topology) and the product space
\[
P=\{p,z\}^\mathbb{N}.
\]
Then, there is a Cantor set $S$ in $P$ such that for all $s=(s_n)_{n\ge1},t=(t_n)_{n\ge1}\in S$, $s\ne t$ implies $(s_m,t_m)=(z,p)$ for some $m\ge1$. 
\end{lem}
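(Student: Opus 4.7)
The plan is to obtain this lemma as a direct application of the Kuratowski--Mycielski theorem stated just above. First I would note that $P=\{p,z\}^{\mathbb{N}}$, being a countable product of the two-point discrete space, is homeomorphic to the standard Cantor set; in particular, $P$ is a perfect, complete, separable metric space, so the hypotheses of Kuratowski--Mycielski are met.

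Next, I would introduce the asymmetric set
\[
R=\{(s,t)\in P\times P\colon (s_m,t_m)=(z,p)\text{ for some }m\ge1\},
\]
tailored so that the conclusion of Kuratowski--Mycielski yields exactly the property demanded by the lemma. To verify that $R$ is a dense $G_\delta$-subset of $P\times P$, I would write $R=\bigcup_{m\ge1}R_m$ where
\[
R_m=\{(s,t)\in P\times P\colon s_m=z\text{ and }t_m=p\}.
\]
Since the coordinate projections $P\to\{p,z\}$ are continuous into a discrete space, each $R_m$ is clopen, so $R$ is open (and in particular $G_\delta$). Density is immediate: a nonempty basic open set in $P\times P$ fixes only finitely many coordinates of $s$ and $t$, so for any $m$ beyond those finitely many indices we may freely set $s_m=z$ and $t_m=p$ while respecting the other coordinate constraints.

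Finally, applying the Kuratowski--Mycielski theorem to this dense $G_\delta$-set $R$ produces a Cantor set $S\subset P$ with $S\times S\setminus\Delta\subset R$. Then for any distinct $s,t\in S$, the ordered pair $(s,t)$ lies in $R$, which by definition provides some $m\ge1$ with $(s_m,t_m)=(z,p)$. There is no serious obstacle; the only point requiring a little care is choosing $R$ asymmetrically (tracking the ordered condition $s_m=z$, $t_m=p$ rather than merely $s_m\ne t_m$), so that the asymmetric conclusion of the lemma follows at once from the conclusion of Kuratowski--Mycielski.
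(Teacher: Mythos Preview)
Your proof is correct and follows essentially the same approach as the paper: define the same asymmetric set $R$, observe that it is dense and open (hence $G_\delta$) in $P\times P$, and apply the Kuratowski--Mycielski theorem. Your argument merely supplies a bit more detail (the decomposition $R=\bigcup_m R_m$ into clopen sets and the explicit density check via basic open sets) where the paper simply asserts that $R$ is a dense open subset.
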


\begin{proof}
Note that $P$ is a perfect compact metrizable space. Let $R$ be the set of
\[
(s,t)=((s_n)_{n\ge1},(t_n)_{n\ge1})\in P\times P
\]
such that $(s_m,t_m)=(z,p)$ for some $m\ge1$. We easily see that $R$ is a dense open subset of $P\times P$. By the above theorem, we obtain a Cantor set $S$ in $P$ such that for all $s,t\in S$, $s\ne t$ implies $(s,t)\in R$, i.e., $(s_m,t_m)=(z,p)$ for some $m\ge1$; therefore, the lemma has been proved.
\end{proof}

By using Lemma 1, we prove Theorem 1.

\begin{proof}[Proof of Theorem 1]
As $\{p,z\}\subset X$, we have $\{p,z\}^\mathbb{N}\subset X^\mathbb{N}$. By
\[
\omega((p,z),f\times f)\cap\Delta\ne\emptyset,
\]
we obtain $(r,r)\in\omega((p,z),f\times f)$ for some $r\in\Lambda$. It follows that
\[
(r,r,r,\dots)\in\omega(s,g)
\]
for each $s\in\{p,z\}^\mathbb{N}$; therefore $\omega(s,g)\cap\omega(t,g)\ne\emptyset$ for all $s,t\in\{p,z\}^\mathbb{N}$. Let $s=(s_n)_{n\ge1},t=(t_n)_{n\ge1}\in\{p,z\}^\mathbb{N}$ and assume that $(s_m,t_m)=(z,p)$ for some $m\ge1$. Consider the map
\[
\pi_m\colon X^\mathbb{N}\to X
\]
defined by $\pi_m(u)=u_m$ for all $u=(u_n)_{n\ge1}\in X^\mathbb{N}$. For any $w\in\omega(z,f)$, we have
\[
(q,w)\in\omega((p,z),f\times f)
\]
for some $q\in\Lambda$. We define $v=(v_n)_{n\ge1}(\in X^\mathbb{N})$ by
\begin{equation*}
v_n=
\begin{cases}
q&\text{if $s_n=p$}\\
w&\text{if $s_n=z$}
\end{cases} 
\end{equation*}
for all $n\ge1$. It follows that $v\in\omega(s,g)$ and $\pi_m(v)=w$. As $w\in\omega(z,f)$ is arbitrary, we obtain $\omega(z,f)\subset\pi_m(\omega(s,g))$. Since $t_m=p$, we have 
$\pi_m(\omega(t,g))\subset\Lambda$. By these conditions, we obtain
\[
\omega(z,f)\setminus\Lambda\subset\pi_m(\omega(s,g)\setminus\omega(t,g)).
\]
As $\omega(z,f)\setminus\Lambda$ is uncountable, so is $\omega(s,g)\setminus\omega(t,g)$. Note that $\pi_m$ satisfies
\[
\pi_m\circ g=f\circ\pi_m.
\]
If $\omega(s,g)\setminus Per(g)=\emptyset$, then by $\omega(z,f)\subset\pi_m(\omega(s,g))$, we obtain $\omega(z,f)\setminus Per(f)=\emptyset$, a contradiction. It follows that $\omega(s,g)\setminus Per(g)\ne\emptyset$. By Lemma 1, we can take a Cantor set $S$ in $\{p,z\}^\mathbb{N}$ such that for all $s=(s_n)_{n\ge1},t=(t_n)_{n\ge1}\in S$, $s\ne t$ implies $(s_m,t_m)=(z,p)$ for some $m\ge1$. Since $S$ is an $\omega$-scrambled set for $g$, we conclude that $g$ is $\omega$-chaotic, completing the proof of the theorem.
\end{proof}

By using Corollary 1, we will give several examples of $\omega$-chaotic maps with additional dynamical properties. For this purpose, we recall the notion of {\em abstract $\omega$-limit sets} and related facts \cite{B}.

Given a continuous map $f\colon X\to X$ and $\delta>0$, a finite sequence $(x_i)_{i=0}^k$ of points in $X$, where $k\ge1$, is called a {\em $\delta$-chain} of $f$ if 
\[
\sup_{0\le i\le k-1}d(f(x_i),x_{i+1})\le\delta.
\]
We say that $f$ is {\em chain transitive} if for any $x,y\in X$ and $\delta>0$, there is a $\delta$-chain $(x_i)_{i=0}^k$ of $f$ such that $x_0=x$ and $x_k=y$.

\begin{rem}
\normalfont
For a continuous map $f\colon X\to X$, the following conditions are equivalent
\begin{itemize}
\item $f$ is chain transitive,
\item for any open subset $U$ of $X$, $f(\overline{U})\subset U$ implies $U\in\{\emptyset,X\}$.
\end{itemize}
\end{rem}

\begin{rem}
\normalfont
For any continuous map $f\colon X\to X$ and $x\in X$,
\[
f|_{\omega(x,f)}\colon\omega(x,f)\to\omega(x,f)
\]
is chain transitive.
\end{rem}

\begin{defi}
\normalfont
A homeomorphism $H\colon Z\to Z$ of a compact metric space $Z$ is said to be an {\em abstract $\omega$-limit set} if there are a homeomorphism $f\colon X\to X$ and $x\in X$ such that 
\[
f|_{\omega(x,f)}\colon\omega(x,f)\to\omega(x,f)
\]
is topologically conjugate to $H\colon Z\to Z$, i.e., there is a homeomorphism $\phi\colon \omega(x,f)\to Z$ such that $\phi\circ f|_{\omega(x,f)}=H\circ\phi$.
\end{defi}

By Remark 1 and \cite[Theorem 1]{B}, we obtain the following lemma.

\begin{lem}
A homeomorphism $H\colon Z\to Z$ of a compact metric space $Z$ is an abstract $\omega$-limit set if and only if $H$ is chain transitive.
\end{lem}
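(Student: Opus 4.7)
The plan is to split the equivalence into the two implications and handle each with one of the stated remarks together with the cited theorem of Block.

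For the forward direction (abstract $\omega$-limit set implies chain transitive), I would start from the definition: there exist a homeomorphism $f\colon X\to X$ and $x\in X$ such that $f|_{\omega(x,f)}$ is topologically conjugate to $H$ via some homeomorphism $\phi$. By Remark 2, $f|_{\omega(x,f)}$ is chain transitive. The step I still need is that chain transitivity is invariant under topological conjugacy, which is routine: given $y,y'\in Z$ and $\delta>0$, pull back to points $\phi^{-1}(y),\phi^{-1}(y')\in\omega(x,f)$, choose $\delta'>0$ small enough via uniform continuity of $\phi$ so that a $\delta'$-chain of $f|_{\omega(x,f)}$ maps to a $\delta$-chain of $H$, and connect $\phi^{-1}(y)$ to $\phi^{-1}(y')$ by such a chain. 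Hence $H$ is chain transitive.

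For the converse, I would invoke \cite[Theorem 1]{B}, whose hypothesis is precisely the open-set characterization recorded in Remark 1. Concretely: assume $H$ is chain transitive. By Remark 1, for every open $U\subset Z$ with $H(\overline{U})\subset U$ we have $U\in\{\emptyset,Z\}$. Block's Theorem 1 says exactly that any homeomorphism of a compact metric space satisfying this property is realizable as $f|_{\omega(x,f)}$ for some homeomorphism $f\colon X\to X$ and some $x\in X$, up to topological conjugacy; that is the content of being an abstract $\omega$-limit set.

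The only non-bookkeeping step is the conjugacy-invariance of chain transitivity in the forward direction, and this is a standard uniform-continuity argument rather than a real obstacle. The substantive content lies in \cite[Theorem 1]{B}, which is cited as a black box; the role of Remark 1 is just to phrase chain transitivity in the form required by that theorem's hypothesis, and the role of Remark 2 is to supply the easy direction.
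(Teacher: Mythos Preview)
Your proposal is correct and matches the paper's approach, which simply records that the lemma follows from Remark~1 together with \cite[Theorem~1]{B}; your only addition is spelling out the routine conjugacy-invariance step for the forward direction (which the paper leaves implicit, or absorbs into the ``if and only if'' of Bowen's result). One small slip: the cited reference \cite{B} is Bowen's paper, not Block's.
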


Given a continuous map $f\colon X\to X$, we denote by $M(f)$ the (disjoint) union of minimal sets for $f$. A non-empty closed subset $M$ of $X$ with $f(M)\subset M$ is a minimal set for $f$ if and only if for any closed subset $S$ of $M$, $f(S)\subset S$ implies $S\in\{\emptyset,M\}$. By Zorn's lemma, there is at least one minimal set for $f$; therefore, $M(f)\ne\emptyset$.

A continuous map $f\colon X\to X$ is said to be {\em proximal} if
\[
\liminf_{i\to\infty}d(f^i(x),f^i(y))=0
\]
for all $x,y\in X$. We know that for a continuous map $f\colon X\to X$, the following conditions are equivalent
\begin{itemize}
\item $f$ is proximal,
\item $M(f)$ is a singleton, i.e., there is $p\in X$ such that $f(p)=p$ and $M(f)=\{p\}$.
\end{itemize}

\begin{rem}
\normalfont
If a continuous map $f\colon X\to X$ is proximal, then so is $f^\mathbb{N}\colon X^\mathbb{N}\to X^\mathbb{N}$.
\end{rem}

Here we recall the definition of {\em $\omega^\ast$-chaos} from \cite{SL1}.

\begin{defi}
\normalfont
Let $f\colon X\to X$ be a continuous map. A subset $S$ of $X$ is said to be an {\em $\omega^\ast$-scrambled set} for $f$ if for any $x,y\in S$ with $x\ne y$,
\begin{itemize}
\item $\omega(x,f)\setminus\omega(y,f)$ contains an infinite minimal set for $f$,
\item $\omega(x,f)\cap\omega(y,f)\ne\emptyset$.
\end{itemize}
We say that $f$ is {\em $\omega^\ast$-chaotic} if there is an uncountable $\omega^\ast$-scrambled set for $f$.
\end{defi}

\begin{rem}
\normalfont
Given a continuous map $f\colon X\to X$, it is obvious that an $\omega^\ast$-scrambled set for $f$ is an $\omega$-scrambled set for $f$, thus if $f$ is $\omega^\ast$-chaotic, then $f$ is $\omega$-chaotic. If $f$ is proximal, then every $\omega^\ast$-scrambled set $S$ for $f$ satisfies $|S|\le1$; therefore, $f$ is not $\omega^\ast$-chaotic.
\end{rem}

\begin{rem}
\normalfont
In Section 6 of \cite{LKKF}, an example is given of a continuous map $f\colon X\to X$ such that
\begin{itemize}
\item $X=\overline{Per(f)}$,
\item $f$ is mixing,
\item $f$ is $\omega$-chaotic but not $\omega^\ast$-chaotic.
\end{itemize}
\end{rem}

We can now present the promised examples.

\begin{ex}
\normalfont
Let $Z=S^1=\{(a,b)\in\mathbb{R}^2\colon a^2+b^2=1\}$ and let $q=(0,1)$. We consider the standard bijection $i\colon\mathbb{R}\to Z\setminus\{q\}$ defined as
\[
i(r)=\left(\frac{2r}{r^2+1},\frac{r^2-1}{r^2+1}\right)
\]
for all $r\in\mathbb{R}$, and define a homeomorphism $H\colon Z\to Z$ by
\begin{itemize}
\item $H(q)=q$,
\item $H(i(r))=i(r+1)$ for all $r\in\mathbb{R}$.
\end{itemize}
Since $H$ is chain transitive, by Lemma 2,  there are a homeomorphism $f\colon X\to X$ and $x\in X$ such that 
\[
f|_{\omega(x,f)}\colon\omega(x,f)\to\omega(x,f)
\]
is topologically conjugate to $H\colon Z\to Z$, i.e., there is a homeomorphism $\phi\colon\omega(x,f)\to Z$ such that $\phi\circ f|_{\omega(x,f)}=H\circ\phi$. Consider $Y$, $h$, $g$ as in Corollary 1. Letting $p=\phi^{-1}(q)$, we see that
\begin{itemize}
\item $f(p)=p$,
\item $\omega(x,f)$ is an uncountable set,
\item $\omega(x,f)\cap Per(f)=\{p\}$; therefore, $\omega(x,f)\cap Per(f)\ne\emptyset$ and $\omega(x,f)\setminus Per(f)\ne\emptyset$.
\end{itemize}
It follows that $g$ is $\omega$-chaotic. Since $M(h)=\{p\}$, a singleton, $h$ is proximal and so is $g$. It follows that $g$ is not $\omega^\ast$-chaotic. It is obvious that the topological entropy of $g$ is zero.
\end{ex}

A continuous map $f\colon X\to X$ is said to be 
\begin{itemize}
\item {\em weakly mixing} if $f\times f\colon X\times X\to X\times X$ is transitive,
\item {\em mixing} if for any non-empty open subsets $U,V$ of $X$, there is $j\ge0$ such that $f^i(U)\cap V\ne\emptyset$ for all $i\ge j$.
\end{itemize}
We say that $f$ is {\em uniformly rigid} if for any $\epsilon>0$, there is $j\ge1$ such that
\[
\sup_{x\in X}d(x,f^j(x))\le\epsilon.
\]
Let $g=f^\mathbb{N}\colon X^\mathbb{N}\to X^\mathbb{N}$ and note that
\begin{itemize}
\item as a consequence of Furstenberg's $2$ implies $n$ theorem (see \cite[Proposition I\!I.3]{F}), if $f$ is weakly mixing, then so is $g$,
\item if $f$ is mixing, then so is $g$,
\item if $f$ is uniformly rigid, then so is $g$.
\end{itemize}

\begin{ex}
\normalfont
Given a transitive continuous map $f\colon X\to X$ such that $X$ is an infinite set, we fix $x\in X$ with $X=\omega(x,f)$. Let $Y$, $h$, $g$ be as in Corollary 1. Note that, in this case, $Y=X$ and $h=f$. 
\begin{itemize}
\item[(1)] If $Per(f)\ne\emptyset$, in particular, if $f$ is Devaney chaotic, then $g$ is $\omega$-chaotic.
\item[(2)] As an example given in Section 5 of \cite{FHLO}, if $f$ is proximal, weakly mixing, and uniformly rigid, then $g$ is proximal, weakly mixing, uniformly rigid, $\omega$-chaotic, and not $\omega^\ast$-chaotic.
\item[(3)] As an example constructed in Section 4 of \cite{O}, if $f$ is proximal and mixing, then $g$ is proximal, mixing, $\omega$-chaotic, and not $\omega^\ast$-chaotic.
\end{itemize}
\end{ex}

\begin{ex}
\normalfont
Let $Z=S^1=\{(a,b)\in\mathbb{R}^2\colon a^2+b^2=1\}$. We define a homeomorphism $H\colon Z\to Z$ by $H(c)=c$ for all $c\in Z$. Since $H$ is chain transitive, by Lemma 2,  there are a homeomorphism $f\colon X\to X$ and $x\in X$ such that 
\[
f|_{\omega(x,f)}\colon\omega(x,f)\to\omega(x,f)
\]
is topologically conjugate to $H\colon Z\to Z$. Note that $f(p)=p$ for all $p\in\omega(x,f)$. Consider $Y$, $h$, $g$ as in Corollary 1. As in the proof of Theorem 1, letting $p\in\omega(x,f)$ and $\Lambda=\{p\}$, we have a Cantor set $S$ in $Y^\mathbb{N}$ such that for any $s,t\in S$ with $s\ne t$,
\begin{itemize}
\item $\omega(s,g)\setminus\omega(t,g)$ is an uncountable set,
\item $(p,p,p,\dots)\in\omega(s,g)\cap\omega(t,g)$ and so $\omega(s,g)\cap\omega(t,g)\ne\emptyset$.
\end{itemize}
However, since $g(v)=v$ for all $u\in Y^\mathbb{N}$ and $v\in\omega(u,g)$, $g$ is not $\omega$-chaotic.
\end{ex}

\end{document}